\documentclass{amsart}
\usepackage{color,amsmath,amsthm,amsfonts,amssymb,latexsym}

\input xy
\xyoption{all}

\textwidth 480pt
\textheight 685pt
\oddsidemargin -5pt
\evensidemargin -5pt
\topmargin-10pt

\newcommand{\w}{\omega}
\newcommand{\F}{\mathcal F}
\newcommand{\N}{\mathcal N}

\newcommand{\A}{\mathcal A}
\newcommand{\IR}{\mathbb R}

\newcommand{\cov}{\mathrm{cov}}
\newcommand{\IN}{\mathbb N}
\newcommand{\IZ}{\mathbb Z}

\newcommand{\I}{\mathcal I}
\newcommand{\M}{\mathcal M}

\newcommand{\non}{\mathrm{non}}
\newcommand{\uhr}{\upharpoonright}

\newcommand{\Iccc}{\mathcal I_{ccc}}
\newcommand{\fIcccc}{\widetilde{\mathcal I}_{cccc}}
\newcommand{\Aut}{\mathrm{Aut}}
\newcommand{\K}{\mathcal K}
\newcommand{\IQ}{\mathbb Q}

\newcommand{\seq}[2]{\left\langle\, #1 : #2\,\right\rangle}
\newcommand{\set}[2]{\left\{ \, #1 : #2\,\right\} }

\newtheorem{theorem}{Theorem}
\newtheorem{proposition}{Proposition}

\newtheorem{problem}{Problem}
\theoremstyle{definition}
\newtheorem{remark}{Remark}
\newtheorem{example}{Example}

\title{On critical cardinalities related to $Q$-sets}
\author{Taras Banakh, Micha\l\ Machura and Lubomyr Zdomskyy}

\address{Department of Mathematics, Ivan Franko National University of Lviv, Ukraine; and\newline
Instytut Matematyki, Jan Kochanowski University in Kielce, Poland}
\email{t.o.banakh@gmail.com}

\address{Department of Mathematics, Bar-Ilan University, Ramat Gan 5290002, Israel; and \newline
Institute of Mathematics, University of Silesia, Bankowa 14, Katowice, Poland}
\email{machura@math.biu.ac.il}

\address{Kurt G\"odel Research Center for Mathematical Logic,
University of Vienna, W\"ahringer Stra\ss e 25, A-1090 Wien,
Austria}
\email{lzdomsky@gmail.com}

\subjclass{03E17, 54H05}
\keywords{$Q$-set, small uncountable cardinal}

\thanks{The third author would
like to thank the Austrian Academy of Sciences (APART Program) as
well as the Austrian Science Fund FWF (Grant I 1209-N25)
 for generous support for this research. We are also grateful to the anonymous
referee for the suggestion to include Theorem~\ref{machura}.}

\begin{document}
\begin{abstract} In this note we collect some known information
and prove  new results about
 the small uncountable cardinal $\mathfrak q_0$. The cardinal $\mathfrak q_0$ is defined as the smallest
cardinality $|A|$ of a subset $A\subset \IR$ which is not a $Q$-set (a subspace $A\subset\IR$ is called a {\em $Q$-set} if each subset $B\subset A$ is of type $F_\sigma$ in $A$).
We present a simple proof of a folklore fact that
$\mathfrak p\le\mathfrak q_0\le\min\{\mathfrak b,\non(\mathcal N),\log(\mathfrak c^+)\}$,
and also establish the consistency of a number of
strict inequalities between
 the cardinal $\mathfrak q_0$ and other standard small uncountable cardinals.
This is done by
combining some  known forcing results.
A new result of the paper is the consistency of $\mathfrak{p} < \mathfrak{lr} < \mathfrak{q}_0$, where $\mathfrak{lr}$ denotes the linear refinement number. Another new result is the upper bound $\mathfrak q_0\le\non(\I)$ holding for any $\mathfrak q_0$-flexible cccc $\sigma$-ideal $\I$ on $\IR$.
\end{abstract}
\maketitle

A subset $X$ of the real line is called a {\em $Q$-set} if each subset $A\subset X$ is relative
$F_\sigma$-set in $A$, see \cite[\S4]{Miller}. The study of $Q$-sets was initiated by founders of Set-Theoretic Topology: Hausdorff \cite{Haus}, Sierpi\'nski \cite{Sierp} and Rothberger \cite{Roth}. $Q$-Sets are important as they appear naturally in problems related to (hereditary) normal or $\sigma$-discrete spaces; see \cite{BKT}, \cite{Burke}, \cite{Fle78}, \cite{Fle83}, \cite{FM}, \cite{GLM}, \cite{GN}, \cite{He}, \cite{HH}, \cite{Jun}, \cite{Przym}, \cite{Rudin}, \cite{Tall69}, \cite{Tall}.

We shall be interested in two critical cardinals related to $Q$-sets:
\begin{itemize}
\item $\mathfrak q_0=\min\{|X|:X\subset\IR$ is not a $Q$-set$\}$;
\item $\mathfrak q=\min\{\kappa:$ each subset $X\subset \IR$ of cardinality $|X|\ge\kappa$ is not a $Q$-set$\}$.
\end{itemize}
It is clear that $\mathfrak q_0\le\mathfrak q$. Since each countable subset of the real line is a $Q$-set and each subset $A\subset\IR$ of cardinality continuum is not a $Q$-set, the cardinals $\mathfrak q_0$ and $\mathfrak q$ are uncountable and lie in the interval
$[\omega_1,\mathfrak c]$. So, these cardinals belong to small uncountable cardinals considered
in \cite{vD} and \cite{Vau}. It seems that for the first time the cardinals $\mathfrak q_0$ and $\mathfrak q$
appeared in the survey paper of J.~Vaughan \cite{Vau}, who refered to the paper \cite{GN}, which
 was not published yet at the moment of writing \cite{Vau}. Unfortunately, the cardinal
$\mathfrak q_0$ disappeared in the final version of the paper \cite{GN}. So, our initial motivation was to collect known information on the cardinal $\mathfrak q_0$ in order to have a proper reference (in particular, in the paper \cite{BKT} exploiting this cardinal). Studying this subject we have found a lot of interesting information on the cardinals $\mathfrak q_0$ and $\mathfrak q$ scattered in the literature. It seems that a unique paper devoted exclusively to the cardinal $\mathfrak q_0$ is \cite{Brend} of Brendle (who denotes this cardinal by $\mathfrak q$). Among many other results, in \cite{Brend} Brendle found a characterization of the cardinal $\mathfrak q_0$ in terms of weakly separated families.

Two families $\mathcal A$ and $\mathcal B$ of infinite subsets of a countable set $X$ are called
\begin{itemize}
\item {\em orthogonal} if $A\cap B$ is finite for every sets $A\in\A$ and $B\in\mathcal B$;
\item {\em weakly separated} if there is a subset $D\subset X$ such that $D\cap A$ is infinite for every $A\in\A$ and $D\cap B$ is finite for every $B\in\mathcal B$.
\end{itemize}
Let us recall that a family $\A$ of infinite sets is called {\em almost disjoint} if $A\cap B$ is finite for any distinct sets $A,B\in\A$.

\begin{theorem}\label{brendle} The cardinal $\mathfrak q_0$ is equal to the smallest cardinality $|A|$ of a subset $A\subset 2^\w$ such that the almost disjoint family $\A=\{B_x:x\in A\}$ of brances $B_x=\{x|n:n\in\w\}$ of the binary tree $2^{<\w}$ contains a subfamily $\mathcal B\subset\A$ which cannot be weakly separated from its complement $\A\setminus\mathcal B$.
\end{theorem}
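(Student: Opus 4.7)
The plan is to show, for any $A\subset 2^\w$, that $A$ fails to be a $Q$-set if and only if some subfamily of $\A=\{B_x:x\in A\}$ is not weakly separated from its complement. Combined with the standard observation that $\mathfrak q_0$ is unchanged when $\IR$ is replaced by $2^\w$ in its definition---any $A\subset\IR$ of cardinality less than $\mathfrak c$ contains no non-degenerate interval and is therefore zero-dimensional, hence topologically embeddable into $2^\w$, while for $|A|=\mathfrak c$ one uses that $2^\w$ itself is not a $Q$-set---this yields the desired cardinal equality.

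The core dictionary is: for $B\subset A$, the set $A\setminus B$ is $F_\sigma$ in $A$ if and only if $\mathcal B=\{B_x:x\in B\}$ can be weakly separated from $\A\setminus\mathcal B$. For the ``$\Leftarrow$'' direction, given a separating $D\subset 2^{<\w}$, define the open set $U_m=\bigcup\{[\sigma]:\sigma\in D,\ |\sigma|\ge m\}\subset 2^\w$, where $[\sigma]$ is the basic clopen cylinder. For $x\in B$, the infinitude of $B_x\cap D$ forces $x\in U_m$ for every $m$; for $x\in A\setminus B$, the finiteness of $B_x\cap D$ yields some $m$ with $x\notin U_m$. Hence $A\setminus B=\bigcup_m(A\setminus U_m)$ is $F_\sigma$ in $A$.

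For ``$\Rightarrow$'', as $B$ is $G_\delta$ in $A$, write $B=A\cap\bigcap_n U_n$ for a decreasing sequence of open sets $U_n\subset 2^\w$, and express each $U_n=\bigcup\{[\sigma]:\sigma\in S_n\}$ where $S_n\subset 2^{<\w}$ is an antichain with $|\sigma|\ge n$ for every $\sigma\in S_n$; such a refinement is available by splitting basic cylinders into sub-cylinders of sufficient depth. Set $D=\bigcup_n S_n$. For $x\in B$, the condition $x\in U_n$ produces some $\sigma_n\in S_n$ with $\sigma_n\sqsubset x$, and since $|\sigma_n|\ge n$, these are infinitely many distinct elements of $B_x\cap D$. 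For $x\in A\setminus B$, let $n_0$ be least with $x\notin U_{n_0}$; monotonicity forces $B_x\cap S_n=\emptyset$ for all $n\ge n_0$, and the antichain property of $S_n$ forces $|B_x\cap S_n|\le 1$ for $n<n_0$ (since $B_x$ is a chain in $2^{<\w}$), so $|B_x\cap D|<n_0$ is finite.

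The main difficulty lies in arranging the antichain structure with the length bound in the ``$\Rightarrow$'' direction, since this combinatorial feature coupled with $B_x$ being a chain is precisely what guarantees $|B_x\cap D|<\w$ for $x\notin B$. Granted the dictionary, the theorem follows: $A$ is a $Q$-set iff every $B\subset A$ is $F_\sigma$ in $A$ iff every $B\subset A$ has $A\setminus B$ being $F_\sigma$ in $A$ (the same statement after replacing $B$ by $A\setminus B$) iff every subfamily $\mathcal B\subset\A$ is weakly separated from $\A\setminus\mathcal B$.
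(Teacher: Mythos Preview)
The paper does not give its own proof of this theorem; it is stated as a result of Brendle and attributed to \cite{Brend}. Your argument is correct and is essentially the standard one: the equivalence between ``$B$ is relatively $G_\delta$ in $A$'' and ``the branch family $\{B_x:x\in B\}$ is weakly separated from $\{B_x:x\in A\setminus B\}$'' is exactly the translation used in Brendle's paper, and your reduction from $\IR$ to $2^\w$ is the natural one. One trivial slip: in the ``$\Rightarrow$'' direction the bound should read $|B_x\cap D|\le n_0$ rather than $<n_0$ (one element from each $S_n$ with $n<n_0$), but this is immaterial.
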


Having in mind this characterization, let us consider the following two cardinals (introduced by Brendle in \cite{Brend}):
\begin{itemize}
\item $\mathfrak{ap}$, equal to  the smallest cardinality $|\A|$ of an almost disjoint family  $\A\subset[\w]^\w$ containing a subfamily $\mathcal B\subset\A$ which cannot be weakly separated from $\A\setminus\mathcal B$, and
\item $\mathfrak{dp}$, equal to the smallest cardinal $\kappa$ for which there exist two orthogonal families $\A,\mathcal B\subset[\w]^\w$ of cardinality $|\A\cup\mathcal B|\le\kappa$, which cannot be weakly separated.
\end{itemize}
 The notation $\mathfrak{dp}$ is an abbreviation of ``Dow Principle'' considered by Dow in \cite{Dow97}.

It is clear that $\mathfrak{dp}\le\mathfrak{ap}\le\mathfrak q_0\le\mathfrak q$.
In \cite{Brend} Brendle observed that the cardinals $\mathfrak{dp}$, $\mathfrak{ap}$, and $\mathfrak q_0$ are localized in the interval
$[\mathfrak p,\mathfrak b]$. Let us recall that $\mathfrak b$ is the smallest cardinality of a subset $B$ of the Baire
space $\w^\w$, which is not contained in a $\sigma$-compact subset of $\w^\w$.

The cardinal
$\mathfrak p$ is the smallest cardinality $|\F|$ of a family $\F$ of infinite subsets of $\w$
such that
\begin{itemize}
\item $\F$ has {\em strong finite intersection property}, which means that for each finite
subfamily $\mathcal E\subset\F$ the intersection $\cap\mathcal E$ is infinite, but
\item $\F$ has no infinite pseudo-intersection $I\subset \w$ (i.e., an infinite
set $I\subset\w$ such that $I\setminus F$ is finite for all $F\in\F$).
\end{itemize}

For a cardinal $\kappa$ its logarithm is defined as
$\log(\kappa)=\min\{\lambda:2^\lambda\ge\kappa\}$.
It is clear that $\log(\mathfrak c)=\w$ and $\log(\mathfrak c^+)\in[\w_1,\mathfrak c]$,
 so $\log(\mathfrak c^+)$ is a small uncountable cardinal.
 K\"onig's Lemma implies that $\log(\mathfrak c^+)\le\mathrm{cf}(\mathfrak c)$.
We refer the reader to  \cite{vD},
\cite{Vau} or \cite{Blass} for the definitions and basic properties of small uncountable cardinals
discussed in this note.

The following theorem collects some known lower and upper bounds on the cardinals $\mathfrak{dp}$, $\mathfrak{ap}$, $\mathfrak q_0$ and $\mathfrak q$. For the lower bound $\mathfrak p\le\mathfrak{dp}$ established in \cite{Brend} (and implicitly in \cite{Dow97}) we give an elementary proof which does not involve Bell's characterization \cite{Bell} of $\mathfrak p$ (as the smallest cardinal for which Martin's Axiom for $\sigma$-centered posets fails). Often the inequality $\mathfrak p\le\mathfrak q_0$ is attributed to Rothberger who actually proved in \cite{Roth} that $\mathfrak t>\w_1$ implies $\mathfrak q_0>\w_1$. It should be mentioned that $\mathfrak t=\mathfrak p$ according to a recent breakthrough result of Malliaris and Shelah \cite{MS}.

\begin{theorem}\label{main}
 $\mathfrak p\le\mathfrak{dp}\le\mathfrak{ap}\le\mathfrak q_0\le\min\{\mathfrak b,\mathfrak q\}\le\mathfrak q\le\log(\mathfrak c^+)$.
\end{theorem}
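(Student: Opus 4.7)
My plan is to verify the six inequalities in the chain, dispatching three of them definitionally and the counting upper bound briefly, then concentrating on the two substantive links $\mathfrak p\le\mathfrak{dp}$ and $\mathfrak q_0\le\mathfrak b$.

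\emph{Immediate links and the counting upper bound.} The inequality $\mathfrak{dp}\le\mathfrak{ap}$ is immediate: an almost disjoint family $\A$ witnessing $\mathfrak{ap}$, together with a non-separable split $\mathcal B\subset\A$, yields the orthogonal pair $(\mathcal B,\A\setminus\mathcal B)$ (orthogonality is inherited from almost disjointness) of total cardinality $|\A|=\mathfrak{ap}$. The inequality $\mathfrak{ap}\le\mathfrak q_0$ is Theorem~\ref{brendle} read in the correct direction, since the branch family there is almost disjoint. The inequality $\mathfrak q_0\le\mathfrak q$ is by definition. Finally, for $\mathfrak q\le\log(\mathfrak c^+)$: a $Q$-set $X\subset\IR$ has $2^{|X|}$ subsets, each an $F_\sigma$ set in $X$ and hence of the form $X\cap\bigcup_n C_n$ for closed $C_n\subset\IR$; since $\IR$ has only $\mathfrak c$ closed subsets, at most $\mathfrak c^\w=\mathfrak c$ such $F_\sigma$-subsets of $X$ exist, forcing $2^{|X|}\le\mathfrak c$ and $|X|<\log(\mathfrak c^+)$.

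\emph{Lower bound $\mathfrak p\le\mathfrak{dp}$.} Given orthogonal $\A,\mathcal B\subset[\w]^\w$ with $\kappa=|\A\cup\mathcal B|<\mathfrak p$, I would build a separator $D$ directly from the definition of $\mathfrak p$, avoiding Bell's theorem. The driving observation is that orthogonality makes $A\setminus\bigcup F$ cofinite in $A$ for every $A\in\A$ and every finite $F\subset\mathcal B$, hence infinite. I would construct $D$ as the union of committed finite sets produced by a transfinite recursion on $\kappa$ that mimics a $\sigma$-centered forcing combinatorially: approximations are pairs $(s,F)\in[\w]^{<\w}\times[\mathcal B]^{<\w}$ with $s\cap\bigcup F=\emptyset$, and approximations with the same first coordinate are pairwise compatible. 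The $\kappa$-many requirements (``$B\in F$'' for each $B\in\mathcal B$, and ``$|s\cap A|\ge n$'' for each $A\in\A$ and $n\in\w$) are processed one at a time, applying $\mathfrak p$ at limit stages to the SFIP family of tails within a fixed compatibility class. The principal obstacle is the limit construction; the compatibility-class SFIP provides exactly the $\sigma$-centred structure that $\mathfrak p$ needs, and orthogonality guarantees that positive ($A$-type) and negative ($B$-type) requirements never block one another.

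\emph{Main link $\mathfrak q_0\le\mathfrak b$.} By Theorem~\ref{brendle} it suffices to exhibit $A\subset 2^\w$ of cardinality $\mathfrak b$ whose branch family has a subfamily not weakly separable from its complement. Starting from an unbounded family $\{f_\alpha:\alpha<\mathfrak b\}\subset\w^\w$, I would encode each $f_\alpha$ as a point $x_\alpha\in 2^\w$ (for instance via the characteristic function of its graph under a bijection $\w\times\w\cong\w$), yielding the AD branch family $\{B_{x_\alpha}\}$. A judiciously chosen split $\mathcal B\subset\{B_{x_\alpha}\}$ must then be shown to resist weak separation: any purported separator $D\subset 2^{<\w}$ would give rise to a function $g\in\w^\w$, extracted from the level-by-level behaviour of $D$, that dominates the $f_\alpha$'s labelling $\A\setminus\mathcal B$, contradicting the original unboundedness. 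The main obstacle is the combined choice of encoding, split, and extraction of the dominating function; this is the combinatorial heart of the argument and is the place where $\mathfrak b$ enters essentially.
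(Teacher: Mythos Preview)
Your treatment of the definitional links $\mathfrak{dp}\le\mathfrak{ap}\le\mathfrak q_0\le\mathfrak q$ and of the counting bound $\mathfrak q\le\log(\mathfrak c^+)$ is correct and matches the paper. The two substantive links call for separate comment.

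For $\mathfrak p\le\mathfrak{dp}$, your sketch is essentially a hand-built generic for the Mathias--Prikry-type poset of pairs $(s,F)$, and the phrase ``applying $\mathfrak p$ at limit stages to the SFIP family of tails within a fixed compatibility class'' is where all the work hides: the first coordinates $s$ grow along the recursion, so no single compatibility class contains a cofinal tail, and making the limit step precise amounts to reproving the relevant instance of Bell's theorem---exactly what you said you wished to avoid. The paper sidesteps the recursion entirely with a one-step trick: apply the bare definition of $\mathfrak p$ not on $\omega$ but on the countable set $[\omega]^{<\omega}$. For $x\in\A$ put $\A_x=\{F\in[\omega]^{<\omega}:F\cap x=\emptyset\}$ and for $y\in\mathcal B$, $n\in\omega$ put $\mathcal B_{y,n}=\{F\in[\omega]^{<\omega}:|F\cap y|\ge n\}$; orthogonality yields SFIP, a pseudointersection $\{F_k\}_{k\in\omega}$ exists since the family has size $<\mathfrak p$, and $I=\bigcup_k F_k$ is the separator. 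No transfinite construction, no limit stages.

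For $\mathfrak q_0\le\mathfrak b$ your plan has a genuine gap. You encode an unbounded family as $A=\{x_\alpha\}\subset 2^\omega$ and promise a ``judiciously chosen'' split of the branch family, but you never say what the split is, and there is no evident mechanism by which a weak separator of \emph{some} partition of $\{B_{x_\alpha}:\alpha<\mathfrak b\}$ into two pieces would manufacture a single function dominating the $f_\alpha$'s on one side. What is missing is a second population of points against which unboundedness becomes a separation obstruction. The paper does not route this inequality through Theorem~\ref{brendle} at all: fix a countable dense $Q\subset 2^\omega$, identify $2^\omega\setminus Q$ with $\omega^\omega$, and choose $B\subset 2^\omega\setminus Q$ of size $\mathfrak b$ contained in no $\sigma$-compact subset of $2^\omega\setminus Q$. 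Then $A=B\cup Q$ is not a $Q$-set, since if $B$ were relative $F_\sigma$ in $A$ then $Q=A\cap G$ for some $G_\delta$ set $G\supset Q$ in $2^\omega$, whence $B\subset 2^\omega\setminus G$, a $\sigma$-compact subset of $2^\omega\setminus Q$, contradicting the choice of $B$. This gives $\mathfrak q_0\le|A|=\mathfrak b$ directly.
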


\begin{proof} The equality $\mathfrak q\le\log(\mathfrak c^+)$ follows from the fact that
each subset of a $Q$-set is Borel, and that a second countable space contains at
most $\mathfrak c$ Borel subsets.

The inequality $\mathfrak q_0\le\mathfrak q$ is trivial. To see that $\mathfrak q_0\le\mathfrak b$, choose any countable dense  subset $Q$ in the Cantor cube $2^\w$ and consider its complement $2^\w\setminus Q$, which is
 homeomorphic to the Baire space $\w^\w$ by the Aleksandrov-Urysohn Theorem \cite[7.7]{Ke}.
By the definition of the cardinal $\mathfrak b$, the space $2^\w\setminus Q$ contains a subset
$B$ of cardinality $|B|=\mathfrak b$ which is contained in no $\sigma$-compact subset of
$2^\w\setminus Q$. Then the union $A=B\cup Q$ is not a $Q$-set as the subset $B$ is not
relative $F_\sigma$-set in $A$.
Consequently, $\mathfrak q_0\le|B\cup Q|=|B|=\mathfrak b$.

The inequality $\mathfrak{ad}\le\mathfrak{q}_0$ follows from Theorem~\ref{brendle} and $\mathfrak{dp}\le\mathfrak{ap}$ is trivial.
Finally, we prove the inequality $\mathfrak p\le\mathfrak{dp}$. We need to check that any
two orthogonal families $\A,\mathcal B\subset[\w]^\w$ with $|\A\cup\mathcal B|<\mathfrak p$ are
weakly separated. By
$[\w]^{<\w}$ we denote the family of all finite subsets of $\w$.

For every $n\in\w$ and $x\in \A$ and $y\in \mathcal B$ consider
the families
$$\A_x=\{F\in[\w]^{<\w}:F\cap x=\emptyset\}\mbox{ \ and \ }
\mathcal B_{y,n}=\{F\in[\w]^{<\w}:|F\cap y|\ge n\}.$$
It is easy to check that the family
 $\F=\{\A_x:x\in \A\}\cup\{\mathcal B_{y,n}:y\in \mathcal B,\;n\in\w\}\subset[[\w]^{<\w}]^{\w}$
 has the strong finite
intersection property. Since $|\F|<\mathfrak p$, this family has an infinite pseudointersection
$\I=\{F_k\}_{k\in\w}$. It follows that the union $I=\bigcup_{k\in\w}F_k$ has finite intersection
 with each set $x\in \A$ and infinite intersection with each set $y\in \mathcal B$. Hence $\A$ and $\mathcal B$
are weakly separated.
\end{proof}

According to \cite{Fle78}, each $Q$-set $A\subset\IR$ is meager and Lebesgue null and hence belongs to the intersection $\M\cap\mathcal N$ of the ideal $\mathcal M$ of meager subsets of $\IR$ and the ideal $\mathcal N$ of Lebesgue null sets in $\IR$. The ideal $\mathcal M\cap\mathcal N$ contains the $\sigma$-ideal $\mathcal E$ generated by closed null sets in $\IR$. Cardinal characteristics of the $\sigma$-ideal $\mathcal E$ have been studied in \cite[\S2.6]{BaJu}. It turns out that each $Q$-set $A\subset \IR$ belongs to the ideal $\mathcal E$, which implies that $\mathfrak q_0\le\non(\mathcal E)$. More generally, $\mathfrak q_0\le \non(\mathcal I)$ for any flexible cccc $\sigma$-ideal $\mathcal I$ on $\IR$.

A family $\I$ of subsets of a set $X$ is called a {\em $\sigma$-ideal} on $X$ if $\bigcup\I=X\notin\I$ and for each countable subfamily $\A\subset\I$, each subset $A\subset\bigcup\A$ belongs to $\I$.
By $\non(\I)$ we denote the smallest cardinality $|A|$ of a subset $A\subset X$ which does not belong to $\I$. It is clear that $\w_1\le\non(\I)\le|X|$. We shall say that a $\sigma$-ideal on a topological space $X$ has ({\em $\sigma$-compact})  {\em Borel base} if each set $A\in\I$ is contained in a ($\sigma$-compact) Borel subset $B\in\I$ of $X$.

Let $\I$ be a $\sigma$-ideal on a set $X$.
A bijective function $f:X\to X$ will be called an {\em automorphism} of $\I$ if $\{f(A):A\in\I\}=\I$. It is clear that automorphisms of $\I$ form a
subgroup $\Aut(\I)$ in the group of all bijections of $X$ endowed with the operation of composition.  The group $\Aut(\I)$ will be called {\em the automorphism group} of the ideal $\I$. A $\sigma$-ideal $\I$ will be called {\em $\kappa$-flexible} for a cardinal number $\kappa$ if for any subsets $A,B\subset X$ with $|A\cup B|<\kappa$ there is an automorphism $f\in\Aut(\I)$ such that $f(A)\cap B=\emptyset$. A $\sigma$-ideal $\I$ on a set $X$ will be called {\em flexible} if it is $\kappa$-flexible for the cardinal $\kappa=|X|$.

\begin{proposition} Each $\sigma$-ideal $\I$ on any set $X$ is $\non(\I)$-flexible.
\end{proposition}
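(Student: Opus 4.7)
The plan is to construct the desired automorphism as a simple involution that swaps $A$ with a disjoint copy placed inside $X\setminus(A\cup B)$, using the fact that $\non(\I)$-small sets must lie in $\I$.

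First I would observe that if $|A\cup B|<\non(\I)$, then both $A\cup B$ and all its subsets belong to $\I$. Since $X\notin\I$ while $A\cup B\in\I$, the complement $X\setminus(A\cup B)$ cannot belong to $\I$ (otherwise $X$ would be a union of two members of $\I$). Hence $|X\setminus(A\cup B)|\ge\non(\I)>|A|$, so one can choose a set $C\subset X\setminus(A\cup B)$ with $|C|=|A|$ and fix a bijection $\phi\colon A\to C$.

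Next I would define $f\colon X\to X$ as the involution swapping $A$ and $C$ via $\phi$:
\[
f(x)=\begin{cases}\phi(x)&\text{if }x\in A,\\ \phi^{-1}(x)&\text{if }x\in C,\\ x&\text{otherwise.}\end{cases}
\]
Then $f$ is a bijection, $f\circ f$ is the identity, and $f(A)=C\subset X\setminus(A\cup B)$, which immediately yields $f(A)\cap B=\emptyset$.

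The only nontrivial verification is that $f\in\Aut(\I)$, i.e.\ $f(S)\in\I$ for every $S\in\I$ (since $f$ is an involution, this also handles $f^{-1}$). Decomposing $S=(S\cap A)\cup(S\cap C)\cup(S\setminus(A\cup C))$, we get
\[
f(S)=\phi(S\cap A)\cup\phi^{-1}(S\cap C)\cup(S\setminus(A\cup C)).
\]
The first two pieces have cardinality at most $|A|<\non(\I)$, hence lie in $\I$, while the third is a subset of $S\in\I$ and so belongs to $\I$ by the closure of $\sigma$-ideals under subsets. Thus $f(S)\in\I$, completing the argument. No real obstacle appears; the key observation is merely that the complement of $A\cup B$ is too big to lie in $\I$, so it hosts enough room for a disjoint relocation of $A$.
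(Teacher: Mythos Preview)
Your proof is correct and follows essentially the same route as the paper: both choose $C\subset X\setminus(A\cup B)$ with $|C|=|A|$ and take $f$ to be the involution swapping $A$ with $C$ and fixing everything else. The only difference is that you justify $|X\setminus(A\cup B)|\ge\non(\I)$ via ``$A\cup B\in\I$ but $X\notin\I$'', whereas the paper simply notes $|A\cup B|<\non(\I)\le|X|$; and you spell out the verification that $f\in\Aut(\I)$, which the paper leaves as ``easy to see''.
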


\begin{proof} Given any two subsets $A,B\subset X$ with $|A\cup B|<\non(\I)$, we need to find an automorphism $f\in\Aut(\I)$ such that $f(A)\cap B=\emptyset$. Since $|A\cup B|<\non(\I)\le|X|$, there is a subset $C\subset X\setminus (A\cup B)$ of cardinality $|C|=|A|$. Choose any bijective function $f:X\to X$ such that $f(A)=C$, $f(C)=A$ and $f$ is identity on the set $X\setminus (A\cup C)$. It is easy to see that $f$ is an automorphism of the $\sigma$-ideal $\I$ witnessing that $\I$ is $\non(\I)$-flexible.
\end{proof}

A $\sigma$-ideal $\I$ on a group $G$ is called {\em left-invariant} if $\{gA:A\in\I\}=\I$ for every $g\in G$ (which means that the automorphism group $\Aut(I)$ includes all left shifts $l_g:G\to G$, $l_g:x\mapsto gx$).

\begin{example} Each left-invariant $\sigma$-ideal $\I$ on a group $G$ is flexible.
\end{example}

\begin{proof} First we observe that the group $G\notin\I$ is uncountable. Then for any subset $A,B\subset G$ with $|A\cup B|<|G|$, the set $BA^{-1}=\{ba^{-1}:b\in B,\;a\in A\}$ has cardinality $|BA^{-1}|<|G|$. So we can find a point $g\in G\setminus BA^{-1}$ and observe that $gA\cap B=\emptyset$.
\end{proof}

We shall say that a $\sigma$-ideal $\I$ on a topological space $X$ satisfies the {\em compact countable chain condition} (briefly, $\I$ is a {\em cccc ideal\/}) if for any uncountable disjoint family $\mathcal C$ of compact subsets of $X$ there is a set $C\in\mathcal C$ that belongs to the ideal $\I$. This is a bit weaker than the classical {\em countable chain condition} (briefly, {\em ccc}) saying that for any uncountable disjoint family $\mathcal C$ of Borel subsets of $X$ there is a set $C\in\mathcal C$  belonging to the ideal $\I$. A simple example of a cccc $\sigma$-ideal that fails to have ccc is the $\sigma$-ideal $\K_\sigma$ of subsets of $\sigma$-compact sets in the Baire space $\w^\w$.

A metrizable space $X$ is called {\em analytic} if it is a continuous image of a Polish space (see \cite{Ke}).

\begin{proposition}\label{flex} Each $\mathfrak q_0$-flexible cccc $\sigma$-ideal $\I$ on an analytic space $X$ has $\non(\I)\ge \mathfrak q_0$.
\end{proposition}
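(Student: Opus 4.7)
The plan is a contradiction argument. Assume that $A\subset X$ has $|A|<\mathfrak q_0$ but $A\notin\I$; I will try to contradict Theorem~\ref{brendle}.

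First, encode $A$ into $2^\w$ using analyticity. Since the analytic space $X$ is separable metrizable (hence $T_0$), pick a countable base $(U_n)_{n\in\w}$ of $X$ and define the injective Borel map $\psi:X\to 2^\w$ by $\psi(x)(n)=1$ iff $x\in U_n$. The family of branches $\{B_{\psi(a)}:a\in A\}$ is almost disjoint in $2^{<\w}$ and has cardinality $|A|<\mathfrak q_0$. By Theorem~\ref{brendle}, every subfamily of it is weakly separated from its complement: for each $B\subseteq A$ there is $D\subset 2^{<\w}$ with $a\in B\iff|D\cap B_{\psi(a)}|=\w$. Re-expressing $D$ as the $G_\delta$ set $H_D=\{x\in 2^\w:x\uhr n\in D\text{ for infinitely many }n\}$, I find that each $B\subseteq A$ equals $\psi^{-1}(H_D)\cap A$; in other words $\psi(A)$ is itself a $Q$-set in $2^\w$.

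Second, deploy $\mathfrak q_0$-flexibility and cccc. I may assume $\mathfrak q_0>\w_1$ (the case $\mathfrak q_0=\w_1$ is immediate because $\non(\I)\ge\w_1$, so any such $A$ is countable and lies in $\I$). Iteratively apply $\mathfrak q_0$-flexibility along $\w_1$ to produce automorphisms $f_\alpha\in\Aut(\I)$ with the images $\{f_\alpha(A):\alpha<\w_1\}$ pairwise disjoint, each outside $\I$. Combining this with the Souslin-scheme representation of the analytic space $X$ and the $Q$-set property of $\psi(A)$, extract from each $f_\alpha(A)$ a compact non-$\I$ subset $K_\alpha\subset X$; the resulting uncountable disjoint family $\{K_\alpha:\alpha<\w_1\}$ of compact non-$\I$ sets then contradicts cccc.

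The main obstacle is precisely this last extraction. Inner regularity for cccc ideals can fail (the ideal $\K_\sigma$ on $\w^\w$ is cccc yet has no compact non-$\I$ sets at all), so one must exploit the full strength of the $Q$-set property derived in Step~1: every $B\subseteq A$ is of the form $\psi^{-1}(H)\cap A$ for $H$ Borel in $2^\w$, and under the Souslin-scheme representation of $X$ such preimages single out compact ``slices'' of $X$ whose $\I$-membership is controlled by the combinatorial $D$. Making this dictionary between $G_\delta$ subsets of $2^\w$, compact slices in analytic $X$, and $\I$-membership precise is the crux of the argument.
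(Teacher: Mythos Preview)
Your proposal has a genuine gap, and it is exactly the one you flag yourself. You want to extract from each $f_\alpha(A)$ a compact set $K_\alpha\notin\I$, but as you already observe with the example of $\K_\sigma$ on $\w^\w$, cccc $\sigma$-ideals need not admit any compact non-$\I$ sets at all. Your hint about ``compact slices'' coming from a Souslin scheme and the $Q$-set property of $\psi(A)$ does not rescue this: nothing in the $Q$-set property of a single copy $A$ tells you that any compact set associated to $f_\alpha(A)$ must lie outside $\I$. The argument as written simply stops here.

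The paper's proof reverses the direction of the inclusion, and this is the missing idea. One does not seek compact non-$\I$ sets \emph{inside} the $f_\alpha(A)$; one seeks $\sigma$-compact sets \emph{containing} them, and then uses cccc to force one of those supersets \emph{into} $\I$. Concretely: after producing the disjoint copies $A_\alpha=f_\alpha(A)$, apply the $Q$-set property not to $A$ but to the union $A_{\w_1}=\bigcup_{\alpha<\w_1}A_\alpha$, which still has cardinality $<\mathfrak q_0$. This yields pairwise disjoint Borel sets $B_\alpha\subset X$ with $A_\alpha\subset B_\alpha$. Each $B_\alpha$ is analytic, so pick a continuous surjection $g_\alpha:\w^\w\to B_\alpha$ and lift $A_\alpha$ to a set $A'_\alpha\subset\w^\w$ of the same cardinality. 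Now the inequality $\mathfrak q_0\le\mathfrak b$ (which your sketch never invokes) is decisive: $|A'_\alpha|<\mathfrak b$ means $A'_\alpha$ sits inside a $\sigma$-compact subset of $\w^\w$, whose image $K_\alpha\subset B_\alpha$ is $\sigma$-compact and contains $A_\alpha$. The family $(K_\alpha)_{\alpha<\w_1}$ is disjoint, so cccc gives some $K_\alpha\in\I$; hence $A_\alpha\in\I$, and finally $A=f_\alpha^{-1}(A_\alpha)\in\I$.

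Your Step~1, deriving the $Q$-set property via Theorem~\ref{brendle}, is correct but is just the definition of $\mathfrak q_0$ unwound; the paper invokes it directly. The substantive difference is the outer-versus-inner approximation and the use of $\mathfrak q_0\le\mathfrak b$.
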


\begin{proof} We need to show that any subset $A\subset X$ of cardinality $|A|<\mathfrak q_0$ belongs to the ideal $\I$. This is trivial if $|A|<\w_1$. So, we assume that $\w_1\le|A|<\mathfrak q_0$.

Using the $\mathfrak q_0$-flexibility of $\I$, by transfinite induction we can choose a transfinite sequence $(f_\alpha)_{\alpha\in\w_1}$ of automorphisms of $\I$ such that for every $\alpha\in\w_1$ the set $A_\alpha=f_\alpha(A)$ is disjoint with $\bigcup_{\beta<\alpha}f_\beta(A)$. The set $A_{\w_1}=\bigcup_{\alpha\in\w_1}A_\alpha$ has cardinality $|A_{\w_1}|=\max\{\w_1,|A|\}<\mathfrak q_0$ and hence is a $Q$-set (here we use the fact $Q$-sets are preserved by homeomorphisms and $A_{\w_1}$ being zero-dimensional, is homeomorphic to a subspace of the real line.) Consequently, for every $\alpha\in\w_1$ the subset $A_\alpha$ is of type $F_\sigma$ in $A_{\w_1}$ and we can find an $F_\sigma$-set $F_{\alpha}\subset X$ such that $F_\alpha\cap A_{\w_1}=A_\alpha$. It follows that for every $\alpha\in\w_1$ the set $B_\alpha=F_\alpha\setminus\bigcup_{\beta<\alpha}F_\beta$ is Borel in $X$, contains $A_{\alpha}$, and the family $(B_\alpha)_{\alpha\in\w_1}$ is disjoint. Each space $B_\alpha$ is analytic, being a Borel subset of the analytic space $X$. Consequently, we can find a surjective map $g_\alpha:\w^\w\to B_\alpha$ and choose a subset $A_\alpha'\subset\w^\w$ of cardinality $|A'_\alpha|=|A_\alpha|$ such that $g_\alpha(A_\alpha')=A_\alpha$. Since $|A'_\alpha|=|A_\alpha|<\mathfrak q_0\le\mathfrak b$, the set $A_\alpha'$ is contained in a $\sigma$-compact set $K_\alpha'\subset\w^\w$ according to the definition of the cardinal $\mathfrak b$. Then $K_\alpha=g_\alpha(K_\alpha')$ is a $\sigma$-compact subset of $B_\alpha$ containing the set $A_\alpha$. Since the family $(K_\alpha)_{\alpha\in\w_1}$ is disjoint and the $\sigma$-ideal $\I$ satisfies cccc, the set $\{\alpha\in\w_1:K_\alpha\notin\I\}$ is at most countable. So, for some ordinal $\alpha\in\w_1$ the set $K_\alpha$ belongs to $\I$ and so does its subset $A_\alpha$. Then $A=f_\alpha^{-1}(A_\alpha)\in\I$ as $f_\alpha\in\Aut(\I)$.
\end{proof}

Let $\fIcccc$ be the intersection of all flexible cccc $\sigma$-ideals on the real line.
Proposition~\ref{flex} implies that $\mathfrak q_0\le\non(\fIcccc)$.
So, any upper bound on the cardinal $\non(\fIcccc)$ yields an upper bound on $\mathfrak q_0$.

In fact, in the definition of the cardinal $\non(\fIcccc)$ we can replace the real line by any uncountable zero-dimensional Polish space. Given a topological space $X$ denote by $\fIcccc(X)$ the intersection of all flexible cccc $\sigma$-ideals on $X$.

\begin{proposition}\label{pol} For any uncountable Polish space $X$ we get $\non(\fIcccc)\le \non(\fIcccc(X))$. If the space $X$ is zero-dimensional, then $\non(\fIcccc)=\non(\fIcccc(X))$.
\end{proposition}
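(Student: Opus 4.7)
The plan is to transfer flexible cccc $\sigma$-ideals between $X$ and $\IR$ via the Cantor space $2^\omega$, using two classical facts: every uncountable Polish space is Borel-isomorphic to $2^\omega$ (Kuratowski), and $2^\omega$ embeds as a closed subspace of $\IR$ (for instance as the standard middle-thirds Cantor set).

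For the inequality $\non(\fIcccc)\le\non(\fIcccc(X))$, I would fix a witness pair $(B,\mathcal J)$ consisting of a flexible cccc $\sigma$-ideal $\mathcal J$ on $X$ and a set $B\notin\mathcal J$ of cardinality $\non(\fIcccc(X))$. Choose a Borel isomorphism $\phi\colon X\to 2^\omega$ and invoke Kuratowski's refinement theorem to produce a finer Polish topology $\tau'$ on $X$ (with the same Borel $\sigma$-algebra) in which $\phi$ becomes a homeomorphism onto $2^\omega$; since refining a topology can only shrink the collection of compact subsets, the cccc of $\mathcal J$ carries over to $\tau'$, so the pushforward $\mathcal J'=\phi_*\mathcal J$ is a flexible cccc $\sigma$-ideal on $2^\omega$. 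Identifying $2^\omega$ with a closed subspace $C\subset\IR$, define
\[
\mathcal I=\{S\subset\IR:S\cap C\in\mathcal J'\};
\]
the closedness of $C$ in $\IR$ ensures that $D\cap C$ is compact in $C$ for any compact $D\subset\IR$, from which the cccc of $\mathcal I$ follows by a case split on whether $(D_\alpha\cap C)_\alpha$ has uncountably many nonempty members; flexibility of $\mathcal I$ combines the flexibility of $\mathcal J'$ on $C$ with bijective shifts of $\IR\setminus C$, where there is ample room since $|\IR\setminus C|=\mathfrak c$. Then $\phi(B)\subset C\subset\IR$ satisfies $\phi(B)\cap C=\phi(B)\notin\mathcal J'$, so $\phi(B)\notin\mathcal I$, giving $\non(\fIcccc)\le|\phi(B)|=|B|$.

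For the reverse inequality in the zero-dimensional case, I would run the symmetric construction: push a witness ideal on $\IR$ forward to $2^\omega$ via a Borel isomorphism, then extend to $X$ by embedding $2^\omega$ into $X$ as a closed subspace whose complement has cardinality $\mathfrak c$ (possible because any uncountable zero-dimensional Polish space contains closed copies of $2^\omega$ with large complements, e.g.\ via the Cantor--Bendixson theorem applied within $X$). The hard part throughout will be rigorously justifying the cccc-preservation step: since cccc is a topological, not a Borel, condition, transferring it along a Borel isomorphism is not automatic. My approach leans on Kuratowski's refinement of the Polish topology to turn $\phi$ into a homeomorphism, together with the monotonicity of cccc under refinement (finer topology $\Rightarrow$ fewer compact sets $\Rightarrow$ cccc becomes easier); making this precise, and in particular confronting the question of whether the target topology of $2^\omega$ also needs to be refined (which would complicate transferring cccc back to the original topology of $\IR$), is the main technical hurdle.
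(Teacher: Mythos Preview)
Your approach has a genuine gap at the cccc-preservation step, and you have correctly flagged it yourself without resolving it. The claim that a Borel isomorphism $\phi\colon X\to 2^\omega$ can be upgraded to a homeomorphism by refining only the topology on $X$ is false in general: Kuratowski's refinement theorem makes $\phi$ continuous, but for $\phi^{-1}$ to be continuous every original open set of $X$ would have to lie in the pullback topology $\phi^{-1}(\tau_{2^\omega})$, and there is no reason for that. If instead you refine both sides, then $\phi_*\mathcal J$ is cccc only for the refined topology on $2^\omega$, and monotonicity goes the wrong way there (coarsening a topology produces \emph{more} compact sets, so cccc becomes harder, not easier). Hence your argument does not establish cccc of the transferred ideal on $\IR$, and the same obstruction blocks the reverse direction.

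The paper avoids this by never leaving the topological category. Given the witness ideal $\I$ on $X$, it passes to the set $X'$ of condensation points, deletes a countable dense set $D$ to obtain a Polish nowhere locally compact space $Z=X'\setminus D$, and then invokes the Alexandrov--Urysohn theorem to get an honest injective \emph{continuous} map $f\colon \IR\setminus\IQ\to Z$ (a homeomorphism when $X$ is zero-dimensional). The pulled-back family $\mathcal J=\{A\subset\IR:f(A\setminus\IQ)\in\I\}$ is then cccc on $\IR$ by a disjointness trick: in any uncountable disjoint family $(K_\alpha)$ of compact subsets of $\IR$, all but countably many $K_\alpha$ miss the countable set $\IQ$ entirely, hence are compact already in $\IR\setminus\IQ$, and their $f$-images are compact in $X$ by continuity. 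Flexibility and the non-membership of the witness set transfer because the discarded pieces $\IQ$ and $X\setminus Z$ are countable and thus lie in every $\sigma$-ideal. The moral is that the correct transfer vehicle here is a continuous map defined off a countable set, not a Borel isomorphism patched by topology refinement.
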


\begin{proof} Choose a subset $A\subset X$ of cardinality $|A|=\non(\fIcccc(X))$ that does not belong to the ideal $\fIcccc(X)$ and hence does not belong to some $\mathfrak c$-flexible cccc $\sigma$-ideal $\I$ on $X$. Let $X'$ be the (closed) subset of $X$ consisting of all points $x\in X$ that have no countable neighborhood in $X$. It follows that the space $X'$ is perfect (i.e., has no isolated points) and the complement $X\setminus X'$ is countable and hence belongs to the ideal $\I$. Fix any countable dense subset $D\subset X'$ and observe the space $Z=X'\setminus D$ is Polish and nowhere locally compact. By \cite[7.7, 7.8]{Ke}, the space $Z$ is the image of the space of irrationals $\IR\setminus\IQ$ under an injective continuous map $f:\IR\setminus\IQ\to Z$. It can be shown that $\mathcal J=\{A\subset\IR:f(A\setminus \IQ)\in\I\}$ is a $\mathfrak c$-flexible cccc $\sigma$-ideal on $\IR$ such that $f^{-1}(A)\notin\mathcal J$. So, $\non(\fIcccc)\le\non(\mathcal J)\le|f^{-1}(A)|\le|A|=\non(\fIcccc(X))$.

If the space $X$ is zero-dimensional, then by \cite[7.7]{Ke} the space $Z$ is homeomorphic to $\IR\setminus\IQ$ and we can assume that $f:\IR\setminus\IQ\to Z$ is a homeomorphism. Since the complement $X\setminus Z$ is countable, for every $\mathfrak c$-flexible cccc $\sigma$-ideal $\I$ on $\IR$ the family $f(\I)=\{A\subset X:f^{-1}(A)\in\I\}$  is a $\mathfrak c$-flexible cccc $\sigma$-ideal on $X$, which implies that $\non(\fIcccc(X))\le\non(\fIcccc)$.
\end{proof}

For a Polish group $G$ let $\Iccc(G)$ be the intersection of all
invariant ccc $\sigma$-ideals with Borel base on $G$. It is clear that $\fIcccc(G)\subset \Iccc(G)$ and hence $\non(\fIcccc(G))\le\non(\Iccc(G))$. For the compact Polish group $\IZ_2^\w=\{0,1\}^\w$ the ideal $\Iccc(\IZ_2^\w)$, denoted by $\Iccc$, was introduced and studied by Zakrzewski \cite{Zak1}, \cite{Zak2} who proved that $\mathfrak s_\w\le\non(\Iccc)\le \min\{\non(\M),\non(\mathcal N)\}$. Here $\mathfrak s_\w$ is the $\w$-splitting number introduced in \cite{Mal} and studied in \cite{CK}, \cite{Kra}. We recall that
\begin{itemize}
\item $\mathfrak s$ is the smallest cardinality of a subfamily $\mathcal S\subset[\w]^\w$ such that for every infinite set $X\subset\w$ there is $S\in\mathcal S$ such that $|X\cap S|=|X\setminus S|=\w$;
\item $\mathfrak s_\w$ is the smallest cardinality of a subfamily $\mathcal S\subset[\w]^\w$ such that for every sequence $(X_n)_{n\in\w}$ of infinite subsets of $\w$ there is $S\in\mathcal S$ such that $|X_n\cap S|=|X_n\setminus S|=\w$.
\end{itemize}
It is clear that $\mathfrak s\le\mathfrak s_\w$. On the other hand, the consistency of $\mathfrak s<\mathfrak s_\w$ is an open problem (attributed to Steprans). By Theorems 3.3 and 6.9 \cite{Blass}, the cardinal $\mathfrak s$ is localized in the interval $[\mathfrak h,\mathfrak d]$, where $\mathfrak h$ is the {\em distributivity number} and $\mathfrak d$ is the {\em dominating number} (it is equal to the smallest cardinality of a cover of $\w^\w$ by compact subsets). The proof of the inequality $\mathfrak s\le\mathfrak d$ in Theorem 3.3 of \cite{Blass} can be easily modified to obtain $\mathfrak s_\w\le\mathfrak d$. In the following theorem by $\mathcal E$ we denote the $\sigma$-ideal generated by closed Lebegue null sets on the real line.

\begin{theorem} $\mathfrak q_0\le\non(\fIcccc)\le\min\{\mathfrak b,\non(\Iccc)\}\le \min\{\mathfrak b,\non(\mathcal N)\}=\min\{\mathfrak b,\non(\mathcal E)\}$.
\end{theorem}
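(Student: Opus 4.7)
The plan is to prove each inequality in the chain separately. The first one, $\mathfrak q_0\le\non(\fIcccc)$, follows immediately from Proposition~\ref{flex}: every flexible cccc $\sigma$-ideal $\I$ on $\IR$ satisfies $\mathfrak q_0\le\non(\I)$, so any subset of $\IR$ of cardinality less than $\mathfrak q_0$ lies in every such ideal and hence in the intersection $\fIcccc$.

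For the bound $\non(\fIcccc)\le\mathfrak b$, I would exhibit a concrete witness on the Baire space $\w^\w$. Identifying $\w^\w$ with the Polish group $\IZ^\w$ (both are characterised up to homeomorphism as the unique uncountable, zero-dimensional, nowhere locally compact Polish space), the $\sigma$-ideal $\K_\sigma$ of subsets of $\sigma$-compact sets is translation-invariant and is therefore flexible by the Example on left-invariant ideals; it is automatically cccc since every compact subset of $\w^\w$ lies in $\K_\sigma$. By definition of $\mathfrak b$, $\non(\K_\sigma)=\mathfrak b$, so Proposition~\ref{pol} gives $\non(\fIcccc)=\non(\fIcccc(\w^\w))\le\non(\K_\sigma)=\mathfrak b$.

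For $\non(\fIcccc)\le\non(\Iccc)$, I would observe that on any Polish group $G$ every invariant ccc $\sigma$-ideal with Borel base is automatically a flexible cccc $\sigma$-ideal, because invariance gives flexibility (by the Example) and ccc implies cccc. Hence $\fIcccc(G)\subseteq\Iccc(G)$ and so $\non(\fIcccc(G))\le\non(\Iccc(G))$. Specialising to $G=\IZ_2^\w$, which is an uncountable zero-dimensional Polish space, Proposition~\ref{pol} yields $\non(\fIcccc)=\non(\fIcccc(\IZ_2^\w))\le\non(\Iccc(\IZ_2^\w))=\non(\Iccc)$. The inequality $\non(\Iccc)\le\non(\mathcal N)$ is the relevant half of the Zakrzewski result quoted in the paragraph preceding the statement.

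Finally, it remains to justify the equality $\min\{\mathfrak b,\non(\mathcal N)\}=\min\{\mathfrak b,\non(\mathcal E)\}$. Since closed null sets are null, $\mathcal E\subset\mathcal N$ and hence $\non(\mathcal E)\le\non(\mathcal N)$, giving the $\le$ direction. For the reverse, I would invoke the Bartoszy\'nski--Judah theorem from \cite[\S2.6]{BaJu} asserting $\non(\mathcal E)\ge\min\{\mathfrak b,\non(\mathcal N)\}$; taking the minimum with $\mathfrak b$ on both sides closes the chain. The only step requiring a genuinely nontrivial external input is this last lower bound on $\non(\mathcal E)$; everything else is the routine assembly of Propositions~\ref{flex} and~\ref{pol} with the Example on invariant ideals.
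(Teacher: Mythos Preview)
Your proof is correct and follows essentially the same route as the paper's own proof: Proposition~\ref{flex} for the first inequality, Proposition~\ref{pol} applied to $\K_\sigma$ on $\IZ^\w$ and to $\Iccc$ on $\IZ_2^\w$ for the second, the inclusion $\Iccc\subset\mathcal N$ for the third, and the Bartoszy\'nski--Judah result (Theorem~2.6.8 in \cite{BaJu}) for the final equality. The only cosmetic differences are that the paper derives $\non(\Iccc)\le\non(\mathcal N)$ directly from $\mathcal N$ being an invariant ccc $\sigma$-ideal with Borel base rather than citing Zakrzewski, and it simply quotes Theorem~2.6.8 of \cite{BaJu} for the last equality without unpacking the two directions.
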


\begin{proof} The inequality $\mathfrak q_0\le\non(\fIcccc)$ follows from Proposition~\ref{flex}. Since $\fIcccc(\IZ^\w_2)\subset \Iccc(\IZ_2^\w)=\Iccc$, Proposition~\ref{pol} guarantees that $\non(\fIcccc)=\non(\fIcccc(\IZ^\w_2))\le\non(\Iccc)$. Observe that the $\sigma$-ideal $\K_\sigma$ consisting of subsets of $\sigma$-compact sets in the topological group $\IZ^\w$ is a flexible cccc $\sigma$-ideal with $\non(\K_\sigma)=\mathfrak b$. Then Proposition~\ref{pol} implies that $\non(\fIcccc)=\non(\fIcccc(\IZ^\w))\le\non(\K_\sigma)=\mathfrak b$. The inequality $\non(\Iccc)\le\min\{\non(\M),\non(\mathcal N)\}$ follows from the fact that the ideals $\M$ and $\N$ are ccc $\sigma$-ideals with Borel base. Taking into account that $\mathfrak b\le\non(\M)$, we conclude that $\min\{\mathfrak b,\non(\Iccc)\}\le\min\{\mathfrak b,\non(\M),\non(\N)\}=\min\{\mathfrak b,\non(\mathcal N)\}$. The equality $\min\{\mathfrak b,\non(\mathcal N)\}=\min\{\mathfrak b,\non(\mathcal E)\}$ follows from Theorem 2.6.8 \cite{BaJu}.
\end{proof}

Next, we establish some consistent inequalities between the cardinals $\mathfrak q_0$, $\mathfrak q$
 and some other known small uncountable cardinals, in particular
$\mathfrak h$, $\mathfrak g$, $\mathfrak s$, $\mathfrak d$, $\mathfrak e$.
The definitions of these cardinals and provable relations between them can be found in \cite{Blass} and \cite{Vau}.
We shall also consider a relatively new cardinal $\mathfrak{lr}$, called the {\em linear refinement number}, and equal to the minimal cardinality $|\mathcal F|$ of a family $\mathcal F\subset[\w]^\w$ with strong finite intersection property that has no linear refinement. A family $\mathcal L\subset [\w]^\w$ is called a {\em linear refinement} of $\F$ if $\mathcal L$ is linearly ordered by the preorder $\subset^*$ and for every $F\in\F$ there is $L\in\mathcal L$ with $L\subset^* F$.
The linear refinement number $\mathfrak{lr}$ was introduced by Tsaban in  \cite{T} (with the ad-hoc name $\mathfrak{p}^*$) and has been thoroughly studied in \cite{MST}.

ZFC-inequalities between the cardinals $\mathfrak{dp}$, $\mathfrak{ap}$, $\mathfrak q_0$, $\mathfrak q$ and some other cardinal characteristics of the continuum are described in the following diagram (the inequality $\mathfrak{ap}\le\mathrm{cov}(\mathcal M)$ was proved by Brendle in \cite{Brend}):

$$
\xymatrix{
&&\mathfrak d\ar@{-}[dd]\ar@{-}[llddd]\ar@{-}[lddd]\ar@{-}[rd]
&\mathfrak a\ar@{-}[d]&\non(\M)\ar@{-}[d]\ar@{-}[ld]&
\non(\mathcal N)\ar@{-}[ld]\ar@{-}[ddd]&\log(\mathfrak c^+)\ar@{-}[d]\\
&&&\mathfrak b\ar@{-}[ddd]\ar@{-}[rd]&\non(\I_{ccc})\ar@{-}[d]&&\mathfrak q\ar@{-}[lldd]\\
&&\mathfrak s_\w\ar@{-}[d]\ar@{-}[rru]&&\non(\widetilde\I_{cccc})\ar@{-}[d]\\
\mathfrak{lr}\ar@{-}[rrrrddd]&\mathfrak g\ar@{-}[rrd]
&\mathfrak s\ar@{-}[rd]&&\mathfrak q_0\ar@{-}[d]&\mathrm{cov}(\mathcal M)\ar@{-}[dd]\ar@{-}[ld]\\
&&&\mathfrak h\ar@{-}[rdd]&\mathfrak{ap}\ar@{-}[d]&\\
&&&&\mathfrak{dp}\ar@{-}[d]&\mathfrak{e}\ar@{-}[d]\ar@{-}[ld]\\
&&&&\mathfrak p&\mathrm{add}(\mathcal N)
}
$$
\medskip

The next theorem will be proved  mainly by combining known results (in a rather straightforward
way).

\begin{theorem} \label{th2} Each of the following inequalities is consistent with ZFC:
\begin{enumerate}
\item $\omega_1 =\mathfrak p= \mathfrak s= \mathfrak g= \mathfrak q_0=\mathfrak q=\log(\mathfrak c^+)<
\mathrm{add}(\mathcal N)=\mathfrak b=\mathfrak{lr}=\mathfrak c=\w_2$;
\item $\omega_1=\mathfrak q_0=\mathfrak q=\log(\mathfrak c^+)<\mathfrak h=\mathfrak{lr}=\mathfrak c=\omega_2$;
\item $\omega_1=\mathfrak p=\mathfrak s_{\w}<\mathfrak{dp}=\mathfrak q=\mathfrak c=\omega_2$;
\item $\w_1=\mathfrak q_0=\mathfrak q=\mathfrak b<\mathfrak g=\w_2$;
\item $\w_1=\mathfrak q_0=\mathfrak d=\non(\mathcal N)<\mathfrak q=\mathfrak c=\w_2$;
\item $\w_1=\mathfrak q_0=\mathrm{non}(\mathcal M)=\mathfrak{a}<\mathfrak{q}=\mathfrak{d}=\mathrm{cov}(\mathcal M)=\mathfrak c=\w_2$;
\item $\w_1=\mathfrak{dp}<\mathfrak{ap}=\mathfrak c=\w_2$;
\item $\w_1=\mathfrak{ap}<\mathfrak{q}_0=\mathfrak c=\w_2$;
\item $\w_1=\mathfrak p<\mathfrak{lr}=\w_2<\mathfrak q_0=\mathfrak c=\w_3$.
\end{enumerate}
\end{theorem}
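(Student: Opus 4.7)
The plan is to verify each of the nine consistency statements by exhibiting a suitable forcing extension of a model of CH. Items (2)--(8) follow from combining well-known models with standard preservation theorems, while items (1) and (9), both involving the linear refinement number $\mathfrak{lr}$, require a finite-support iteration based on the construction of \cite{MST} for $\mathfrak{p} < \mathfrak{lr}$.

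For the routine items I would proceed as follows. Item (2) iterates Mathias forcing to obtain $\mathfrak{h} = \omega_2$ while a standard preservation argument keeps an $\omega_1$-sized ground-model almost disjoint family witnessing $\mathfrak{q}_0 = \omega_1$ via Theorem~\ref{brendle}. Item (4) uses Blass--Shelah forcing to push $\mathfrak{g}$ up to $\omega_2$ while keeping $\mathfrak{b}$ and $\mathfrak{q}_0$ down. Items (5) and (6) use, respectively, a Hechler-type iteration and a Cohen iteration to push $\mathfrak{d}$, $\mathrm{cov}(\mathcal{M})$ and $\mathfrak{q}$ up to $\omega_2$ while preserving an $\omega_1$-sized $\mathfrak{q}_0$-witness. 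Items (3), (7) and (8), which separate $\mathfrak{p}$, $\mathfrak{dp}$, $\mathfrak{ap}$ and $\mathfrak{q}_0$, are the models constructed in \cite{Dow97} and \cite{Brend}, so for them it suffices to cite the corresponding theorems.

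Items (1) and (9) form the substantive part. For item (9), $\mathfrak{p} < \mathfrak{lr} < \mathfrak{q}_0$, my plan is a two-step construction over a CH ground model. First, perform the finite-support iteration of length $\omega_2$ from \cite{MST} that produces a family $\mathcal{F} \subseteq [\omega]^\omega$ of cardinality $\omega_2$ with the strong finite intersection property admitting no linear refinement, while keeping $\mathfrak{p} = \omega_1$; this yields an intermediate model $V_1$ in which $\mathfrak{p} = \omega_1 < \mathfrak{lr} = \omega_2 = \mathfrak{c}$. Then, over $V_1$, perform a finite-support iteration of length $\omega_3$ of $\sigma$-centered ``$Q$-set-making'' forcings: using a bookkeeping over all names for sets of reals of size $\leq \omega_2$, at stage $\alpha$ one forces the chosen set to become a $Q$-set via the natural $\sigma$-centered poset of finite approximations to simultaneous $F_\sigma$-decompositions of its subsets (see \cite{Miller}). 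At the end $\mathfrak{c} = \omega_3$ and every set of reals of size $< \omega_3$ is a $Q$-set, whence $\mathfrak{q}_0 = \omega_3$. Item (1) is obtained similarly, by replacing the $Q$-set-making stage with an amoeba iteration of length $\omega_2$ that pushes $\mathrm{add}(\mathcal{N})$ and $\mathfrak{b}$ up to $\omega_2$ while preserving the linear-refinement witness.

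The main obstacle is to show that the second iteration preserves both $\mathfrak{p} = \omega_1$ and $\mathfrak{lr} = \omega_2$. Preservation of $\mathfrak{p}$ is delicate because a finite-support iteration meeting dense sets in all $\sigma$-centered posets forces $\mathrm{MA}_{\omega_1}$ for $\sigma$-centered posets and would raise $\mathfrak{p}$ to $\omega_3$; the bookkeeping must therefore be restricted to $Q$-set-making iterands, and one must verify by induction that some specific $\omega_1$-sized filter base in $V_1$ with the strong finite intersection property acquires no pseudointersection along the iteration. Preservation of $\mathfrak{lr} \geq \omega_2$ reduces to showing that the $Q$-set-making forcing adds no $\subset^*$-linear refinement of $\mathcal{F}$, which can be verified by an analysis of the $\sigma$-centered structure of these posets analogous to the preservation lemmas of \cite{MST}. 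The same preservation analysis, with amoeba forcing in place of $Q$-set-making forcing, handles item (1). Carrying out this double preservation argument by simultaneous induction on the stages of the iteration is the technically demanding step; once it is in place, both items (1) and (9) are immediate.
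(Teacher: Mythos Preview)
Your plan has several genuine gaps, and in one place you are working much harder than necessary.

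\textbf{Item (1).} You treat this as substantive, but it is not. In any model with $\mathfrak b=\omega_2$ one has $\mathfrak d\ge\omega_2$, and Theorem~2.2 of \cite{MST} says that $\mathfrak{lr}=\omega_1$ implies $\mathfrak d=\omega_1$; hence $\mathfrak{lr}=\omega_2$ is automatic. The paper simply quotes the Judah--Shelah Suslin-forcing model \cite{JudShe88} for $\omega_1=\mathfrak p=\mathfrak s=\log(\mathfrak c^+)<\mathrm{add}(\mathcal N)=\mathfrak b=\mathfrak c=\omega_2$, observes $\mathfrak g=\omega_1$ from general facts about finite-support iterations of Suslin posets, and reads off $\mathfrak{lr}=\omega_2$ as above. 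No MST-style iteration or preservation of a linear-refinement witness is needed, and your proposed amoeba-over-MST construction would still have to produce $\log(\mathfrak c^+)=\omega_1$, which you do not address.

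\textbf{Items (2) and (4).} Your preservation argument for $\mathfrak q_0=\omega_1$ via an almost disjoint family misses that the statements demand $\mathfrak q=\log(\mathfrak c^+)=\omega_1$, which forces $2^{\omega_1}>\mathfrak c$. The paper gets this by first forcing with the countably closed Cohen poset adding $\omega_3$ subsets of $\omega_1$, and only then iterating Mathias (resp.\ Blass--Shelah or Miller) forcing; $\mathfrak q\le\log(\mathfrak c^+)=\omega_1$ then comes for free from Theorem~\ref{main}, with no preservation argument required. Your sketch does not arrange $2^{\omega_1}>\mathfrak c$ and so cannot yield the stated constellations.

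\textbf{Item (5).} You describe this as a Hechler-type iteration pushing $\mathfrak d$ to $\omega_2$, but the statement has $\mathfrak d=\omega_1$; the point of the Judah--Shelah model \cite{JudSh91} is precisely that an uncountable $Q$-set exists while $\mathfrak d=\non(\mathcal N)=\omega_1$.

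\textbf{Item (9).} Your two-stage plan is in the right spirit, but the paper proceeds differently: it runs a single finite-support iteration of length $\lambda\cdot\lambda$ in which $\mathcal F$-Mathias steps (as in \cite{MST}) are interleaved with Dow's posets $\mathbb Q_{\mathcal A}$ from \cite{Dow97}, the latter replacing Hechler forcing from the MST construction. The key technical ingredient you only gesture at is supplied by Dow: Lemma~2 of \cite{Dow97} shows that $\mathbb Q_{\mathcal A}$ adds no pseudointersection to eventually narrow families (in particular, to the family of Cohen reals), which is exactly what makes Lemmata~3.11--3.15 of \cite{MST} go through and yields $\mathfrak p=\omega_1$, $\mathfrak{lr}=\omega_2$. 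Your generic ``$Q$-set-making'' poset and the unspecified restriction of the bookkeeping do not by themselves give this; identifying Dow's forcing and invoking his lemma is the missing idea.
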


\begin{proof}
1. The consistency of
$\w_1=\mathfrak s=\mathfrak p=\log(\mathfrak c^+)<\mathrm{add}(\mathcal N)=\mathfrak b=\mathfrak c=\w_2$
is a direct consequence of  \cite[Theorems~3.2 and 3.3]{JudShe88},
see \cite[Lemma 3.16]{JudShe88} for some explanations. The equality $\w_1=\mathfrak g$ follows from the
well-known  fact that
$\mathfrak g $ equals $\w_1$ after iterations with finite supports of Suslin posets,
see, e.g., \cite{Bre_slides}. The equality $\mathfrak{lr}=\w_2$ follows from Theorem 2.2 \cite{MST} (saying that $\mathfrak{lr}=\w_1$ implies $\mathfrak d=\w_1$).
\smallskip

2. To obtain a model of  $\w_1=\mathfrak q_0=\log(\mathfrak c^+)<\mathfrak h=\w_2$
let  us consider an iteration
 $\langle\mathbb P_\alpha,\dot{\mathbb Q}_\beta:\beta<\alpha\leq \w_2\rangle$
with countable supports such that $\mathbb Q_0$ is the countably closed Cohen poset adding
$\w_3$-many subsets to $\w_1$ with countable conditions.
For every $0<\alpha<\w_2$ let $\dot{\mathbb Q}_\alpha$ be a $\mathbb P_\alpha$-name
for the Mathias forcing, see \cite{Mat77} or \cite[p.~478]{Blass}. It is standard to check that
 $2^{\w_1}=\w_3>\w_2=2^\w$ holds in the final model, and
hence $\log(\mathfrak c^+)=\mathfrak q_0=\w_1$ there. Also, $\mathfrak h=\w_2=2^\w$ in this model simply by the design of the
Mathias poset, see the discussion on  \cite[p.~478]{Blass}. The equality $\mathfrak{lr}=\w_2$ follows from Theorem 2.2 \cite{MST}.
\smallskip

3. A model with $\w_1=\mathfrak p<\mathfrak{dp}=\mathfrak q=\w_2=\mathfrak c$ was
constructed by Alan Dow in \cite{Dow97}, see  Theorem 2 there. Below we shall also show
that {$\mathfrak s_{\w}=\w_1$} in that model.
Following \cite{BreHru09} we say that a forcing notion $\mathbb P$
\emph{strongly preserves countable tallness} if for every sequence
$\langle\tau_n:n\in\omega\rangle$  of $\mathbb P$-names for infinite
subsets of $\omega$
 there is a sequence $\langle B_n:n\in\omega\rangle$ of infinite subsets of $\omega$  such that for any
$B\in [\omega]^\omega$, if $B\cap B_n$ is infinite for all $n$, then
$\vDash_{\mathbb P}$ ``$B\cap\tau_n$ is infinite for all $n$''.
In \cite[Theorem~2]{Dow97} a poset $\mathbb P$ has been constructed such that
 $\mathfrak q_0=\mathfrak b=\mathfrak c>\omega_1$ holds in $V^{\mathbb P}$.
By the definition, $\mathbb P$ is an iteration with finite supports of
posets of the form $\mathbb Q_\mathcal{A}$, see \cite[Def.~2]{Dow97}.
Observe that the  notion of posets strongly preserving
countable tallness  remains the same if we demand the existence of
the sequence $\langle B_n:n\in\omega\rangle$ with the property stated there just for a
single $\mathbb P$-name $\tau $ for an infinite subset of $\omega$.
Therefore it follows from Lemmata 2,3 in \cite{Dow97} that the posets
$\mathbb Q_{\mathcal A}$ strongly preserve
countable tallness. Applying \cite[Lemma~5]{BreHru09} we conclude that
$\mathbb P$ strongly preserves countable tallness as well.
The latter easily implies that the ground model reals are splitting, and hence $\mathfrak s_\w=\omega_1$.
Indeed, given a {sequence of $\mathbb P$-names $\langle \tau_n:n\in\omega\rangle$ for an infinite subsets of $\omega$}
find an appropriate sequence $\langle B_n:n\in\omega\rangle$ of ground model
 infinite subsets of $\omega$. Now let $X\in [\omega]^\omega\cap V$
be such that $X$ splits all the $B_n$'s. Then $\Vdash$ ``$X$ splits every {$\tau_n$}''.
\smallskip

4. The condition $\w_1=\mathfrak b=\mathfrak{q}_0<\mathfrak g=\w_2=\mathfrak c$ holds, e.g., in the  model of
Blass and Shelah constructed in \cite{BS}, and in the Miller's model constructed in \cite{Mil84},
see   \cite{BL89} for the proof.
If, as in item 2, these forcings are preceded  by
the countably closed Cohen poset adding
$\w_3$-many subsets to $\w_1$ with countable conditions, then
we get in addition $2^{\w_1}=\w_3>\w_2=2^\w$ in the extension, and hence $\mathfrak q$ equals
$\w_1$ as well.
\smallskip

5. The consistency of $\w_1=\mathfrak q_0=\mathfrak d=\non(\mathcal N)<\mathfrak q=\mathfrak c=\w_2$ was proved
by Judah and Shelah \cite{JudSh91} (see also \cite{MilMAD}).
\smallskip

6. A model with $\w_1=\mathfrak q_0=\mathrm{non}(\mathcal M)=\mathfrak{a}<\mathfrak{q}=\mathfrak{d}=\mathrm{cov}(\mathcal M)=\mathfrak c=\w_2$ was constructed by Miller \cite{MilMAD}.

7,8. For every regular cardinal $\kappa>\w_1$ the consistency of the strict inequalities $\w_1=\mathfrak{dp}<\kappa=\mathfrak{ap}=\mathfrak c$ and $\w_1=\mathfrak{ap}<\kappa=\mathfrak{q}_0=\mathfrak c$ was proved by Brendle \cite{Brend}.

9. The consistency of $\w_1=\mathfrak p<\mathfrak{lr}=\w_2<\mathfrak q_0=\mathfrak c=\w_3$ follows from Theorem~\ref{machura} below.
\end{proof}

\begin{theorem}\label{machura}
Assume the Generalized Continuum Hypothesis and let  $\kappa$ and $\lambda$ be uncountable regular cardinal numbers such that $\kappa <\lambda = \lambda^{<\kappa}$.
There is a forcing notion $\mathbb{P}$ such that in a generic extention $V[G]$:
$\mathfrak{p}=\kappa$, $\mathfrak{lr} = \kappa^{+}$, and $\mathfrak{q}_0= \lambda= \mathfrak{c}$.
\end{theorem}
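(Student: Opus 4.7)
The plan is to build $\mathbb{P}$ as a finite-support ccc iteration over $V\models\mathrm{GCH}$, performed in two phases: a preparatory phase $\mathbb{P}_0$ of length $\kappa^+$ producing witnesses for $\mathfrak{p}\le\kappa$ and $\mathfrak{lr}\le\kappa^+$, followed by a main phase $\mathbb{P}_1$ of length $\lambda$ iterating $\sigma$-centered $Q$-set posets to push $\mathfrak{q}_0$ up to $\lambda$. The hypothesis $\lambda^{<\kappa}=\lambda$ (automatic from GCH when $\kappa<\lambda$ are regular) underwrites the bookkeeping during $\mathbb{P}_1$, while ccc-ness together with $|\mathbb{P}|=\lambda$ gives $\mathfrak{c}=\lambda$ in $V[G]$.

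For $\mathbb{P}_0$ I would follow the Mildenberger--Shelah--Tsaban approach from \cite{MST}: a careful $\kappa^+$-step ccc iteration producing (i) a tower $\mathcal{T}=\{T_\alpha:\alpha<\kappa\}$ of length $\kappa$ with no pseudointersection, witnessing $\mathfrak{p}\le\kappa$ (note that $\mathcal{T}$ is its own linear refinement under $\subseteq^*$, so it does not simultaneously witness $\mathfrak{lr}\le\kappa$), together with (ii) an SFIP family $\mathcal{F}\subset[\omega]^\omega$ of size $\kappa^+$ admitting no linear refinement, witnessing $\mathfrak{lr}\le\kappa^+$. Parallel bookkeeping over the intermediate extensions of $\mathbb{P}_0$ handles the lower bounds: force pseudointersections for every SFIP family of size $<\kappa$ (yielding $\mathfrak{p}\ge\kappa$) and linear refinements for every SFIP family of size $\le\kappa$ (yielding $\mathfrak{lr}\ge\kappa^+$) in $V^{\mathbb{P}_0}$.

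Over $V_1:=V^{\mathbb{P}_0}$ the main phase $\mathbb{P}_1$ is a finite-support iteration of length $\lambda$ in which, at stage $\alpha$, one forces with the standard $\sigma$-centered $Q$-set poset $\mathbb{Q}_{A_\alpha}$ that turns the bookkept set $A_\alpha\subset 2^\omega$ into a $Q$-set. Using $\lambda^{<\kappa}=\lambda$ one enumerates all $\mathbb{P}_1$-names $\dot A$ for subsets of $2^\omega$ of size $<\lambda$ appearing in intermediate extensions, so that every such $A$ is a $Q$-set in $V[G]$; this forces $\mathfrak{q}_0\ge\lambda$, and hence $\mathfrak{q}_0=\lambda=\mathfrak{c}$.

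The main obstacle is the preservation argument: one must verify that $\mathcal{T}$ remains a tower (keeping $\mathfrak{p}\le\kappa$) and that $\mathcal{F}$ continues to admit no linear refinement (keeping $\mathfrak{lr}\le\kappa^+$) throughout the entire iteration $\mathbb{P}_0\ast\dot{\mathbb{P}}_1$. For $\mathcal{T}$, a one-step lemma — that $\mathbb{Q}_A$ adds no pseudointersection of $\mathcal{T}$ when $\mathcal{T}$ has the specific combinatorial shape produced in $\mathbb{P}_0$ — combined with the standard preservation-of-$\mathfrak{t}$ machinery for finite-support $\sigma$-centered iterations should suffice. The delicate ingredient is preservation of \emph{no linear refinement} for $\mathcal{F}$: here I expect to need a dedicated template-style preservation lemma tailored to the structure of $\mathcal{F}$, asserting that no iterand $\mathbb{Q}_A$ can add a $\subseteq^*$-linearly ordered family refining $\mathcal{F}$, and that this property is closed under finite-support $\sigma$-centered iteration. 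Establishing this preservation theorem for $\mathfrak{lr}$ is the heart of the proof and its principal technical novelty.
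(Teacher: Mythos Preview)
Your two-phase architecture has a genuine gap concerning the \emph{lower} bounds $\mathfrak p\ge\kappa$ and $\mathfrak{lr}\ge\kappa^+$. You secure these only in $V^{\mathbb P_0}$, by bookkeeping the SFIP families that arise during the first $\kappa^+$ steps. But the second phase $\mathbb P_1$ is a finite-support iteration of length $\lambda>\kappa^+$; at limit stages of countable cofinality it adds Cohen reals, and over the course of $\lambda$ steps it produces many new SFIP families of size $<\kappa$ (resp.\ $\le\kappa$) that were never handled by your $\mathbb P_0$-bookkeeping. Nothing in the $Q$-set posets $\mathbb Q_A$ supplies pseudointersections or linear refinements for these, so in $V[G]$ you have no reason to expect $\mathfrak p\ge\kappa$; indeed Dow's own model (which is essentially your $\mathbb P_1$ over a CH ground model) has $\mathfrak p=\omega_1$. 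The forcings that push $\mathfrak p$ and $\mathfrak{lr}$ up must therefore be \emph{interleaved} with the $Q$-set forcings throughout the whole iteration, not confined to a preparatory block.

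This is exactly what the paper does: it runs the template iteration of \cite{MST} of length $\lambda\cdot\lambda$, keeping the filtered Mathias iterands (which realize $\mathfrak p=\kappa$ and $\mathfrak{lr}=\kappa^+$ by the arguments of Lemmata~3.11--3.15 there) and simply \emph{replacing the Hechler iterands by Dow's posets} $\mathbb Q_{\mathcal A}$ at the stages $\lambda\cdot\xi$. The bookkeeping for small SFIP families then runs cofinally through the entire iteration, so the lower bounds survive to $V[G]$, while Dow's forcings (rather than Hechler) yield $\mathfrak q_0\ge\lambda$ instead of $\mathfrak b\ge\lambda$. Moreover, the ``delicate preservation'' you flag as the principal novelty is in fact not new: within the MST framework the upper bounds for $\mathfrak p$ and $\mathfrak{lr}$ come from eventually narrow families built out of the Cohen reals, and the only thing to check is that the new iterands add no pseudointersection to such families. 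For $\mathbb Q_{\mathcal A}$ this is precisely Lemma~2 of \cite{Dow97}. So no bespoke preservation theorem for ``no linear refinement'' is required; the existing MST iteration lemma plus Dow's lemma already do the job.
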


\begin{proof}
A forcing notion we use is very similar to one in  Theorem 3.9 from \cite{MST}. The difference is that we use Dow's focings $\mathbb{Q}_{\mathcal{A}}$ instead of Hechler forcing and a length of iteration is equal to the ordinal $\lambda \cdot \lambda$.

 More precisely the forcing $\mathbb{P}$ is given by an iteration:
\begin{enumerate}
\item $\seq{\mathbb{P}_{\alpha}, \dot{\mathbb{Q}}_{\beta}}{\alpha \leq \lambda \cdot \lambda, \beta < \lambda \cdot \lambda}$
is a finite support iteration;
\item $\mathbb{P}= \mathbb{P}_{\lambda \cdot \lambda}$;
\item $\mathbb{P}_0$ is the trivial forcing;
\item if  $\alpha = \lambda \cdot \xi $ where $\xi >0$, then:
\begin{enumerate}
\item$\Vdash_{\mathbb{P}_{\alpha}} \dot{\mathbb{Q}}_{\alpha}$ is  a Dow forcing $\mathbb{Q}_{\mathcal{A}_{\xi}}$ defined for a family $\dot{\mathcal{A}}_{\xi}$;
\item $\dot{\mathcal{A}}_{\xi}$ is a $\mathbb{P}_{\alpha}$-name for an ideal on $\w$ generated by an almost disjoint family of  cardinality $<\lambda$;
\item for each $\beta$ if
$\Vdash_{\mathbb{P}_{\beta}} \dot{\mathcal{A}}$  is  an ideal on $\w$ generated by an almost disjoint family of  cardinality $<\lambda$,
then exists  $\alpha > \beta$ such that $\alpha = \lambda \cdot \xi$ and
$\Vdash_{\mathbb{P}_{\alpha}} \dot{\mathcal{A}} = \dot{\mathcal{A}}_{\xi}$.
\end{enumerate}
\item  if $\alpha  \notin \set{\lambda \cdot \xi}{ \xi >0}$, then
\begin{enumerate}
\item $\Vdash_{\mathbb{P}_{\alpha}} \dot{\mathbb{Q}}_{\alpha}$ is an $\dot{\mathcal{F}}_{\alpha}$-Mathias forcing;
\item $ \dot{\mathcal{F}}_{\alpha}$ is a name for a filter generated by a family
$\set{\dot{A}_{\alpha,\iota}}{\iota < \iota_{\alpha}}$ which contains cofinite sets and has strong finite intersection property , where
$\iota_{\alpha}$ is an ordinal $< \kappa$;
\item $\iota_{\alpha}=0$ for $\alpha < \lambda$ (thus $\mathbb{Q}_{\alpha}$ is isomorphic to Cohen's forcing for $\alpha < \lambda$);
\item $\dot{A}_{\alpha,\iota}$ is a $\mathbb{P}_{\alpha}$-name for a subset of $\omega$;
\item $b_{\alpha, \iota} \colon (2^{\omega})^{\omega} \to [\w]^{\w} $ is a Borel function coded in the ground model;
\item $\Vdash_{\mathbb{P}_{\alpha}} \dot{A}_{\alpha, \iota} = b_{\alpha, \iota} ( \seq{\dot{B}_{\gamma ( \alpha, \iota, n )}}{n<\omega} )$, where $B_{\alpha}\subset [\w]^{\w}$ denotes the $\alpha$-th generic real;
\item If $\alpha = \lambda \cdot \xi + \nu$, then
$$ \gamma (\alpha, \iota, n) < \lambda \cdot \xi.$$
\item
For each $\zeta < \lambda$ and each sequence $\seq{b_{\iota}}{\iota < \iota_*}$ of Borel functions
$b_{\iota} \colon (2^{\omega})^{\omega} \to [\w]^{\w} $ of length $\iota_*<\kappa$,
and all ordinal numbers $\delta (\iota, n)<\lambda \cdot \zeta $
such that $\mathbb{P}$ forces that the filter generated
by the cofinite sets together with the family
$$\set{b_{\iota} (\seq{B_{\delta ( \iota, n )}}{n<\omega})}{\iota < \iota_*},$$
is proper,
there are arbitrarily large $\alpha < \lambda \cdot ( \zeta +1) $ such that:
\begin{enumerate}
\item $\iota_{\alpha}=\iota_*$;
\item $b_{\alpha, \iota} = b_{\iota} $ for all $\iota < \iota_*$;
\item $\gamma (\alpha, \iota, n) = \delta ( \iota, n)$ for all $\iota < \iota_*$ and all $n$.
\end{enumerate}
\end{enumerate}
\end{enumerate}

A proof of equalities $ \mathfrak{p}=\kappa$, $\mathfrak{lr} = \kappa^{+}$  is essentially the same as in Lemmata 3.11~--~3.15  in \cite{MST}. The only difference is in the iteration Lemma 3.10. Here we need to observe that Dow's forcings $\mathbb{Q}_{\mathcal{A}}$ cannot add a pseudointersection to a familiy with strong finite intersection property formed by Cohen's reals (more generally to  eventually narrow families). This was proven by Dow in Lemma 2 in \cite{Dow97}.

Adding Dow's forcings instead of Hechler forcing give us an inequality
 $\mathfrak{q}_0 \geq \lambda$ instead of $\mathfrak{b} \geq \lambda$.

\end{proof}

The argument in the remark  below is usually attributed to Devlin and Shelah \cite{DevShe78}.
We have learned it from David Chodounsky.
\medskip

\begin{remark}
We did not have   to start with the countably closed Cohen poset  adding
$\w_3$-many subsets to $\w_1$  in items 2 and 4
of  Theorem~\ref{th2} in order to guarantee that $\mathfrak q=\w_1$.
However, the argument presented in the proof of Theorem~\ref{th2} seems
to be easier and more direct, and hence we presented it for those readers who are interested
just in the consistency of corresponding constellations.

Following \cite{MooHruDza04} we denote by $\Diamond(2,=)$ the following statement:
\emph{For every Borel $F:\w^{<\w_1}\to 2$ there exists $g:\w_1\to 2$
such that for every $f:\w_1\to 2$ the set $\{\alpha:F(g\uhr\alpha)=f(\alpha)\}$
is stationary}. Here $F:\w^{<\w_1}\to 2$ is Borel iff $f\uhr\w^\alpha\to 2$ is Borel for
all $\alpha\in\w_1$.
$\Diamond(2,=)$ implies that $\mathfrak q=\w_1$, which means that no uncountable $Q$-set of reals exists. Indeed,
suppose $X = \{x_\alpha: \w<\alpha<\w_1\}$  is a $Q$-set of reals.
Choose some nice coding for $G_\delta$ sets of reals by elements of $2^\w$.
For each $\alpha\in (\w,\w_1)$  define $F_\alpha : 2^\alpha \to 2$ as follows:
For $x$ in $2^\alpha$ put $F_\alpha(x) = 1$ iff  $x_\alpha$ is in the $G_\delta$
set coded by $x\uhr\w$.  $F_\alpha$  is
Borel and thus $F = \bigcup_{\alpha\in\w_1} F_\alpha $  is also Borel. Therefore there exists
 a guessing function $g: \w_1\to 2$ for $F$. Put $Y = \{x_\alpha : g(\alpha) = 0\}$.
Then $Y$ is not a $G_\delta$  subset of $X$.
In order to show this choose a $G_\delta$  set $G$ and any $f: \w_1\to 2$ such that $f\uhr\w$ codes $G$.
Then there is $\beta$  such that
$F(f\uhr\beta) = g(\beta)$, and hence $x_\beta$  is in $G \Delta  Y$ which means $G\cap X \neq Y$.
Finally,  it suffices to note that
$\Diamond(2,=)$ holds in any model considered in items 2,4 of Theorem~\ref{th2},
see \cite[Theorem~6.6]{MooHruDza04}.
\end{remark}

It would be nice to know more about the relation of the cardinals $\mathfrak q_0$ and $\mathfrak q$ to
the cardinals $\mathfrak g$, $\mathfrak e$,
$\cov(\mathcal M)$, and $\cov(\mathcal N)$. Here $\mathfrak e$ is the {\em evasion number} considered by A.~Blass in \cite[\S10]{Blass}.
It follows from \cite[10.4]{Blass} that $\mathfrak q_0=\mathfrak b<\mathfrak e$ is consistent.

\begin{problem} Is any of the inequalities
 $\mathfrak q_0 >\cov(\mathcal M)$, $\mathfrak q_0>\mathfrak e$, $\mathfrak q_0>\mathfrak g$, $\non(\fIcccc)>\mathfrak q_0$ consistent?
In particular, what are the values of
  $\mathfrak e$ and $\mathfrak g$ in the model of Dow (or its modifications)?
\end{problem}

The question whether  $\mathfrak q_0 >\cov(\mathcal M)$ is consistent seems the most intriguing
among those mentioned above. In \cite{Brend} this question is attributed to A.~Miller.

\end{document}